\numberwithin{equation}{section}
\newtheorem{theo}{Theorem}[section]
\newtheorem{pro}[theo]{Proposition}
\newtheorem{lem}[theo]{Lemma}
\newtheorem{conjecture}[theo]{Conjecture}
\newtheorem{prob}[theo]{Problem}
\theoremstyle{remark}
\newtheorem{rem}[theo]{Remark}
\renewcommand{\(}{\left(}
\renewcommand{\)}{\right)}
\newcommand{\R}{\mathbb{R}}
\renewcommand{\d}{\delta}
\renewcommand{\l}{\lambda}
\newcommand{\ra}{\rightarrow}
\newcommand{\Vol}{\operatorname{Vol}}
\newcommand{\dvol}{\operatorname{dvol}}
\newcommand{\tr}{\operatorname{tr}}
\newcommand{\divv}{\operatorname{div }}
\newcommand{\shd}{\operatorname{sh}_\delta}
\newcommand{\chd}{\operatorname{ch}_\delta}
\newcommand{\thd}{\operatorname{th}_\delta}
\begin{document}
\title[Total squared mean curvature]{Total squared mean curvature of immersed submanifolds in a negatively curved space}
\author{Yanyan Niu\textsuperscript{*}}
\thanks{\textsuperscript{*}Supported in part by National Natural Science Foundation of China [11821101]}
\address{Yanyan Niu \newline \indent School of Mathematical Sciences \newline \indent Capital Normal University, Beijing 100048, China}
\email{\href{mailto:yyniukxe@gmail.com}{yyniukxe@gmail.com}}

\author{Shicheng Xu\textsuperscript{\dag}}
\thanks{\textsuperscript{\dag}Supported in part by Beijing Natural Science Foundation [Z19003] and the Youth Innovative Research Team of Capital Normal University}

\address{Shicheng Xu\newline \indent School of Mathematical Sciences\newline \indent Capital Normal University, Beijing 100048, China}
\address{Academy for Multidisciplinary Studies
\newline \indent Capital Normal University, Beijing 100048,  China}
\email{\href{mailto:shichengxu@gmail.com}{shichengxu@gmail.com}}

\date{\today}
\subjclass[2010]{53C21; 53C42}
\keywords{Willmore energy, first eigenvalue, mean curvature, Reilly inequality, Cartan-Hadamard manifold}
\begin{abstract}
Let $n\ge 2$ and $k\ge 1$ be two integers.
Let $M$ be an isometrically immersed closed $n$-submanifold of co-dimension $k$ that is homotopic to a point in a complete manifold $N$, where the sectional curvature of $N$ is no more than $\d<0$. We prove that the total squared mean curvature of $M$ in $N$ and the first non-zero eigenvalue $\lambda_1(M)$ of $M$ satisfies
$$\lambda_1(M)\le n\(\d +\frac{1}{\Vol M}\int_M |H|^2 \dvol\).$$
The equality implies that $M$ is minimally immersed in a metric sphere after lifted to the universal cover of $N$. This completely settles an open problem raised by E. Heintze in 1988.
\end{abstract}
{\maketitle}

\section{Introduction}
Problems on the Willmore energy, i.e. the integral of the square of the mean curvature of submanifolds, in the Euclidean space and other space forms have a long history, around which many important theories and tools have been developed. This energy raised up naturally in the study of vibrating properties of thin plates in the 1810s (see the survey \cite{Marques-Neves2014-2}). The Willmore conjecture, proposed by T. J. Willmore \cite{Willmore1965} in 1965, states that the Willmore energy has minimum $2\pi^2$ among all tori immersed in the Euclidean three space $\mathbb R^3$. 
In 1982, Li and Yau \cite{Li-Yau1982} proved that the Willmore energy of non-embedded surface is at least $8\pi>2\pi^2$.
Marques and Neves \cite{Marques-Neves2014} finally confirmed this conjecture in 2014. 

Meanwhile, the generalization of classical results in $\mathbb R^n$ to a Cartan-Hadamard manifold, such as the isoperimetric inequality, has attracted a lot of attention over years. A Cartan-Hadamard manifold is a complete simply connected Riemannian manifold with nonpositive sectional curvature. Whether the Euclidean isoperimetric inequality holds in a Cartan-Hadamard manifold $N$, is known as the Cartan-Hadamard conjecture. It can trace back to \cite{Weil1926} in 1926 for surfaces, and for higher dimensions \cite{Aubin1975}, \cite{Gromov1981} and \cite{Burago-Zalgaller1980, Burago-Zalgaller1988}. Developments including the most recent refer to \cite{Croke1984}, \cite{Kleiner1992}, \cite{Ghomi-Spruck2021} and references therein. Optimal lower bounds for the Willmore energy of submanifolds also provide key ingredients in the study of isoperimetric inequality in a Cartan-Hadamard manifold of negative curvature \cite{Schulze2020}.  

This paper is to positively answer a problem on the first nonzero eigenvalue and the Willmore energy of immersed closed manifolds in a negatively curved Cartan-Hadamard manifold, which was raised by Heintze \cite{Heintze1988} in 1988.

Let $M$ be a closed and connected Riemannian $n$-manifold isometrically immersed into $\mathbb{R}^{n+k}$. Let $\lambda_1(M)$ be the first eigenvalue of the Laplace operator of $M$.
It was firstly shown by Reilly \cite{Reilly1977} (cf. \cite{BW1976}) that
\begin{equation}\label{ineq-Reilly}
\lambda_1(M)\le \frac{n}{\Vol M}\int_M |H|^2\dvol,
\end{equation}
where $H=\frac{1}{n}\tr \alpha$ is the mean curvature vector field along $M$, and $\alpha$ the second fundamental form of $M$ in $N$. 

In \cite{Heintze1988} Heintze considered the general case that $N$ is a complete Riemannian manifold and $M$ is immersed into a normal ball $B(p,R)$ in $N$. Let $\d$ be the supremum of sectional curvature on $B(p,R)$. When $\d$ is nonnegative and $R \sqrt{\d}\le \frac{\pi}{4}$, he proved that
\begin{equation}\label{ineq-Heintze}
\lambda_1(M)\le n\left(\d+\frac{1}{\Vol M}\int_M |H|^2\dvol\right).
\end{equation}

Heintze observed that \eqref{ineq-Heintze} generally fails for curves in the hyperbolic plane, due to the work of Langer and Singer \cite{Langer-Singer1984}. A weaker inequality was proved in \cite{Heintze1988} for negative $\d$,
\begin{equation}\label{ineq-Weak}
\lambda_1(M)\le n\left(\d+\max_{M} |H|^2\right).
\end{equation}
Based on the fact showed in \cite{Langer-Singer1984} that \eqref{ineq-Heintze} holds for curves in $\mathbb H^2$ of length $\le 2\pi$, Heintze asked in the section \cite[\S 4]{Heintze1988} devoted to open problems that 

\textit{Does \eqref{ineq-Heintze} hold for ``small'' submanifolds $M$ in a negatively curved manifold $N$?}

In 1992 Soufi and Ilias \cite{Soufi-Ilias1992} affirmatively solved the case when $N$ is the hyperbolic space $\mathbb H^{n+k}$. In fact, they proved that \eqref{ineq-Heintze} holds for all $n$-submanifolds with $n\ge 2$ in any Riemannian manifold conformal to $\mathbb H^{n+k}$. 

The main result of this paper gives a positive answer of the above problem for the general case, i.e., all submanifolds of dimension $n\ge 2$ without smallness restriction in a negatively curved Cartan-Hadamard manifold.

\begin{theo}\label{thm-sharp-lambda1}
	The inequality \eqref{ineq-Heintze} holds for any isometrically immersed closed submanifold $M$ of dimension $n\ge 2$ and co-dimension $k\ge 1$ in a Cartan-Hadamard manifold $N$ with sectional curvature $\le \d<0$.
	
	Equality in \eqref{ineq-Heintze} implies that $M$ is minimally immersed into a metric sphere of radius $\operatorname{arcsh}_\d\sqrt{\frac{n}{\lambda_1(M)}}$, where $\operatorname{arcsh}_\d$ is the inverse function of the $\d$-hyperbolic sine $\shd(r)=\frac{1}{\sqrt{-\d}}\sinh\(\sqrt{-\d}r\)$.
\end{theo}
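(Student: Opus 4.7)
My plan is to adapt Reilly's variational argument for \eqref{ineq-Reilly} to the Cartan--Hadamard setting by replacing the Euclidean identities with Hessian and Jacobi--field comparison estimates. Fix a base point $p\in N$, to be chosen below, and define the generalized position vector $F\colon N\to T_pN\cong\R^{n+k}$ by $F(q)=\shd(r(p,q))\,\vec v(q)$, where $\vec v(q)=\exp_p^{-1}(q)/r(p,q)\in S^{n+k-1}\subset T_pN$. Relative to a fixed orthonormal frame of $T_pN$, the components $F_1,\dots,F_{n+k}$ are scalar functions on $N$ that will serve as test functions in the Rayleigh--Ritz characterization of $\lambda_1(M)$. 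To achieve the centering condition $\int_M F_i\,\dvol=0$ for each $i$, I would take $p$ to be the unique minimizer of the functional $E(p):=\int_M\chd(r(p,q))\,\dvol(q)$: the Hessian comparison theorem in Cartan--Hadamard with sectional curvature $\le\d<0$ gives $\Hess^N\chd(r(\cdot,q))\ge-\d\,\chd(r)\,g_N>0$, so $E$ is strictly convex and coercive, and differentiating under the integral sign identifies the critical equation with $\int_M F\,\dvol=0$.

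The geometric engine consists of two pointwise comparison estimates: Rauch's theorem applied to the Jacobi fields generating $d\vec v$ yields
\[
|dF|_M^2\le n+(-\d)\shd(r)^2\,|\nabla^Mr|^2,
\]
and Hessian comparison applied to the composite $\shd(r)^2$ yields
\[
\tr_{TM}\Hess^N\shd(r)^2\ge 2n\,\chd(r)^2+2(-\d)\shd(r)^2|\nabla^Mr|^2.
\]
Subtracting the first from half the second, and using the algebraic identity $\chd^2-1=-\d\shd^2$, gives the pointwise bound
\[
\sum_i F_i\,\tr_{TM}\Hess^N F_i\ge -n\d\,|F|^2,
\]
which is the Cartan--Hadamard analog of Reilly's identity $\Hess^{\mathbb H} F_i=-\d F_i g$ available in the hyperbolic model. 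Inserting this into the submanifold Laplacian decomposition $\Delta^M F_i=\tr_{TM}\Hess^N F_i+n\langle\nabla^N F_i,H\rangle$, applying Green's theorem together with the Rayleigh--Ritz inequality $\lambda_1\int_M|F|^2\le\int_M|dF|_M^2$, a short rearrangement produces
\[
(\lambda_1-n\d)\int_M\shd(r)^2\,\dvol\;\le\;-n\int_M\shd(r)\chd(r)\,\langle\partial_r,H\rangle\,\dvol.
\]

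The remaining step, which I expect to be the main obstacle, is to convert this into the Heintze inequality $(\lambda_1-n\d)\Vol M\le n\int_M|H|^2\,\dvol$. Using $H\in(TM)^\perp$ one has $|\langle\partial_r,H\rangle|\le|H|\sqrt{1-|\nabla^Mr|^2}$, and Cauchy--Schwarz bounds the right-hand side by $n\bigl(\int_M\shd^2\chd^2(1-|\nabla^Mr|^2)\bigr)^{1/2}\bigl(\int_M|H|^2\bigr)^{1/2}$. Passing from this weighted estimate to the unweighted statement in a way that remains sharp on a geodesic sphere appears to require a second application of Green's theorem to $\chd(r)$, whose Hessian lower bound $\Hess\chd\ge-\d\chd\,g_N$ gives $-\int_M\shd\langle\partial_r,H\rangle\ge\int_M\chd\ge\Vol M$, together with a Cauchy--Schwarz carefully tailored to the dimensional hypothesis $n\ge 2$. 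This dimensional restriction is essential: the Langer--Singer counterexamples for closed curves in $\mathbb H^2$ show that \eqref{ineq-Heintze} can fail for $n=1$, so the closing argument must genuinely exploit the higher-dimensional structure, presumably via a Hsiang-type averaging over the orthogonal group of $T_pN$ or an equivalent conformal device.

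The equality case would then follow by tracing back the chain. Rauch equality along the radial geodesics from $p$ forces the ambient sectional curvatures on the corresponding radial planes to equal $\d$; the vanishing of $|\nabla^Mr|$ localizes $M$ on a single geodesic sphere $S(p,R)$; the Cauchy--Schwarz equality forces $H$ to be antiparallel to $\partial_r$ with constant magnitude, identifying $M$ as a minimal immersion into $S(p,R)$; and the eigenvalue equality, forcing each $F_i$ to be a $\lambda_1$-eigenfunction, yields $\lambda_1(M)=n/\shd(R)^2$, so $R=\operatorname{arcsh}_\d\sqrt{n/\lambda_1(M)}$ as asserted.
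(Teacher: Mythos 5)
Your setup is sound and your comparison estimates are correct: the centered test functions $F_i=\shd(r)v_i$, the Rauch bound on $|dF|_M^2$, the Hessian bound on $\tr_{TM}\Hess^N\shd^2$, and the resulting weighted inequality $(\lambda_1-n\d)\int_M\shd^2\le -n\int_M\shd\,\chd\,\langle\partial_r,H\rangle$ are all essentially Heintze's 1988 argument (the paper's Lemma \ref{lem-tech-f} with $f=\shd$ gives the same thing). But the step you flag as ``the main obstacle'' is not a technical rearrangement that can be deferred --- it is precisely the open problem the paper solves, and your sketch of it does not work. The auxiliary bound $-\int_M\shd\,\langle\partial_r,H\rangle\ge\Vol M$ only reproduces Heintze's weak inequality \eqref{ineq-Weak}; Cauchy--Schwarz on your right-hand side leaves an uncancelled factor $\chd^2$ against $\int_M\shd^2$ on the left, and no choice of Cauchy--Schwarz weights closes this. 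The two devices you invoke to finish are unavailable in this generality: averaging over the orthogonal group of $T_pN$ has no meaning when $N$ carries no isometries fixing $p$, and the conformal argument is exactly the Soufi--Ilias proof, which requires $N$ conformal to $\H^{n+k}$ and does not extend to a general Cartan--Hadamard target. So the proposal contains no proof of the decisive estimate, and the equality analysis built on it is moot.

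What the paper does differently, and what is genuinely missing from your attempt, is twofold. First, the weight: the position field is taken as $X=\thd(r)\nabla^Nr$ rather than $\shd(r)\nabla^Nr$, so that \eqref{ineq-nabla-coordinate-f} with $f=\thd$ gives $\sum_i|\nabla^M(\thd(r)x_i/r)|^2\le n/\chd^2=n+n\d\,\thd^2$, i.e.\ the gradient excess is exactly $n\d|X|^2$ and can be absorbed into the $n\d$ term; with your $\shd$ weight the excess $-\d\shd^2|\nabla^Mr|^2$ has the wrong sign and size. Second, and crucially, the new ingredient is the sharp integral Willmore bound of Proposition \ref{prop-sharp-l2-mean-curv}, $\int_M|H|^2\dvol\cdot\int_M\thd^2(r)\dvol\ge\Vol^2(M)$, which combined with $\lambda_1\int\thd^2\le n\Vol M+n\d\int\thd^2$ immediately yields \eqref{ineq-Heintze}. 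Its proof is where the hypothesis $n\ge2$ actually enters: from the divergence identity one gets the key inequality \eqref{ineq-l2-mean-curv-key}, $\|X^\bot\|_2\|H\|_2\ge 1+\frac{\d}{n}\|X^\top\|_2^2$, and then a careful algebraic argument (direct for $n\ge4$; for $n=2,3$ via the monotonicity of $\varphi_h(x)=(h+\frac{\d}{n}x)x$ and the claim \eqref{ineq-l2-mean-curv-claim}, which uses $-\d\thd^2(R)<1\le\frac{n}{2}$) upgrades it to $\|X\|_2^2\|H\|_2^2\ge1$. Nothing of this kind appears in your proposal; without it the passage from the weighted inequality to \eqref{ineq-Heintze} --- and hence the theorem and its rigidity statement --- remains unproven.
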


Theorem \ref{thm-sharp-lambda1} yields an optimal lower bound  $\(\frac{\lambda_1(M)}{n}-\d\)\Vol M$ for the Willmore energy. As what had already known in \cite{Soufi-Ilias1992}, if $N=\mathbb H^{n+k}$, then by \cite{Takahashi1966}, all submanifolds for which \eqref{ineq-Heintze} takes equality coincides exactly with those $M$ isometrically immersed in a Euclidean sphere $S^{n+k-1}$, whose canonical coordinate functions in $\mathbb R^{n+k}$ are the first eigenfunctions of the Laplacian of $M$. The immersion from $M$ to $\mathbb H^{n+k}$ is realized through a totally umbilical embedding of $S$ as a metric sphere in $\mathbb H^{n+k}$. Irreducible homogeneous spaces, a Clifford torus, an equilateral torus, etc. provide such examples (\cite{Takahashi1966}, cf. \cite{Lawson1980}). It is equivalent to a conjecture by Yau \cite[Problem 100]{Yau1982} whether \eqref{ineq-Heintze} is an equality for all embedded minimal hypersurfaces in $S^{n+k}$.

Let $C_n(N^{n+k})=\sup_M\(\lambda_1(M)-\frac{n}{\Vol M}\int_M|H|^2\dvol\)$ be the constant introduced by Heintze in \cite{Heintze1988}, where the
supremum is taken over all $n$-dimensional closed isometrically
immersed submanifolds $M$ of $N$. By \eqref{ineq-Heintze} for $\d=0$, Heintze proved that $C_n(N^{n+k})\le 0$ if $N$ is a Cartan-Hadamard manifold. It follows from Theorem \ref{thm-sharp-lambda1} that if the sectional curvature of $N$ $\le \d<0$, then $C_{n}(N^{n+k})\le n\d<0$ for any $n\ge 2$ and $k\ge 1$. It was already known that $C_n(\mathbb H^{n+k})=-n$ by \cite{Soufi-Ilias1992} for $n\ge 2$ and $C_1(\mathbb H^{n+k})=0$ by \cite{Langer-Singer1984}.

Our method also implies that the ``smallness'', which is originally conjectured by Heintze, does make sense for all regular closed curves.
For a closed curve $\gamma$ of length $l$ in a manifold with sectional curvature $\le \d<0$, $\lambda_1(\gamma)=\frac{4\pi^2}{l^2}$. Hence,
\eqref{ineq-Heintze} is equivalent to
\begin{equation}\label{ineq-total-squared-curvature}
\int_\gamma \kappa^2 ds\ge \frac{4\pi^2}{l} - \d l.
\end{equation}
As mentioned before, \eqref{ineq-total-squared-curvature} generally fails for curves with a large length; see a curve in $\mathbb H^2$ shown by \cite[Figure 8]{Langer-Singer1984} whose length can be a very large value, while the total squared curvature keeps bounded. 

\begin{theo}\label{thm-total-squared-curvature}
	Any regular closed curve $\gamma$ of length $l\le \frac{2\pi}{\sqrt{-\d}}$ in a Cartan-Hadamard $N$ with sectional curvature $\le \d<0$ satisfies \eqref{ineq-total-squared-curvature}.
	
	Among those curves, equality in \eqref{ineq-total-squared-curvature} implies that $\gamma$ lies in a metric sphere $S\subset N$ of radius $\operatorname{arcsh}_\d\frac{L}{2\pi}$, and is a geodesic of $S$.
\end{theo}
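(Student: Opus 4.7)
The plan is to specialize the Reilly-type argument of Theorem \ref{thm-sharp-lambda1} to the one-dimensional setting, with the length bound $l\le 2\pi/\sqrt{-\d}$ playing the role the dimension condition $n\ge 2$ plays in the main theorem. Since $N$ is Cartan-Hadamard, any closed curve lifts to a closed curve in the universal cover (which is $N$ itself), and I may work in $N$. Parametrizing $\g$ by arc length $s\in[0,l]$, the first Laplacian eigenvalue of the circle $S^1_l$ is $\lambda_1=4\pi^2/l^2$, realized by $\cos(2\pi s/l)$ and $\sin(2\pi s/l)$, so \eqref{ineq-total-squared-curvature} is exactly the $n=1$ case of \eqref{ineq-Heintze}.

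Next, I would choose a center point $p\in N$ and form the comparison map
$$Y(x) = \bigl(\chd(r(x)),\; \shd(r(x))\,\omega(x)\bigr)\in\R^{\dim N+1},\qquad r(x)=d(p,x),\ \omega(x)=\exp_p^{-1}(x)/r(x),$$
which coincides with the hyperboloid embedding of $\H^{\dim N}_\d$ into Minkowski space in the constant-curvature model case. The Hessian comparison $\Hess_N\chd(r)\ge -\d\,\chd(r)\,g_N$ yields along $\g$ the pointwise inequality $\Delta_\g Y_i\ge -\d\, Y_i+\langle H,\nabla Y_i\rangle$ for each component $Y_i$, and $p$ is to be chosen so that every centered component $Y_i\circ\g-\overline{Y_i\circ\g}$ is an admissible test function for $\lambda_1(\g)$.

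Applying the Rayleigh inequality to each centered component, summing over $i$, integrating by parts, and using the identity $\sum_i|\nabla Y_i|^2=\chd(r)^2$ (exact in the model; an inequality in general) gives
$$(\lambda_1-\d)\int_\g\sum_i(Y_i-\overline{Y_i})^2\,ds \le \int_\g\sum_i(Y_i-\overline{Y_i})\langle H,\nabla Y_i\rangle\,ds.$$
A Cauchy-Schwarz bound on the right, together with a matching lower bound on the left in terms of $\chd(r)^2$, would reduce the estimate to $(\lambda_1-\d)\cdot l\le\int_\g\k^2\,ds$, which is \eqref{ineq-total-squared-curvature}.

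The main obstacle is the centering of $Y$: the length bound $l\le 2\pi/\sqrt{-\d}$ enters precisely so that the center-of-mass or fixed-point argument producing $p$ succeeds. For $n\ge 2$ the submanifold provides additional topological flexibility; for $n=1$ the one-dimensional image $\g(S^1)$ demands this length condition, consistently with the counterexamples to \eqref{ineq-total-squared-curvature} in $\H^2$ of Langer and Singer for longer curves. For equality, tracking equality in the Hessian comparison forces the sectional curvature along $\g$ to equal $\d$; equality in the Rayleigh quotient forces each centered component of $Y\circ\g$ to be a first eigenfunction; and equality in Cauchy-Schwarz forces $\k$ to be constant along $\g$. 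Together these identify $\g$ as a geodesic of a metric sphere of radius $\operatorname{arcsh}_\d(l/2\pi)$ lying in a totally geodesic 2-plane of constant curvature $\d$ in $N$, as claimed.
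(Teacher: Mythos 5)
There is a genuine gap, and it is located exactly where the theorem is delicate: your proposal never lets the hypothesis $l\le \frac{2\pi}{\sqrt{-\d}}$ enter the estimates. You place its role in the ``centering'' of the comparison map, but centering is never the obstruction: in a Cartan-Hadamard manifold the energy $\mathcal F(q)=\int_\gamma\Phi_\d(d(q,x))$ is strictly convex and has a minimum for \emph{every} closed curve, so admissible centered test functions exist regardless of length --- and yet \eqref{ineq-total-squared-curvature} fails for long curves in $\H^2$ by Langer--Singer. Hence any correct proof must use the length bound quantitatively inside the Reilly-type estimate, and your key step (``a Cauchy--Schwarz bound on the right, together with a matching lower bound on the left \dots would reduce the estimate to $(\lambda_1-\d)l\le\int_\gamma\kappa^2$'') is precisely the step that is false in general for $n=1$; it cannot be treated as routine. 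In the paper the mechanism is a bootstrap: Rayleigh's principle plus the gradient bound \eqref{ineq-nabla-coordinate-f} give $\lambda_1\|X\|_2^2\le 1+\d\|X\|_2^2$, hence $\|X\|_2^2\le(\lambda_1-\d)^{-1}=(4\pi^2/l^2-\d)^{-1}\le\frac{1}{-2\d}$ when $l\le\frac{2\pi}{\sqrt{-\d}}$; this bound on the averaged position is exactly the sufficient condition \eqref{ineq-sharp-sufficient} that makes the $n=1$ case of the sharp $L^2$ mean curvature bound $\|X\|_2\|H\|_2\ge1$ of Proposition \ref{prop-sharp-l2-mean-curv} go through, after which one concludes as in Theorem \ref{thm-sharp-lambda1}. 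Nothing in your outline produces an analogous use of the length.

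There are also technical problems with the hyperboloid-map scheme itself. The pointwise inequality $\Delta_\gamma Y_i\ge-\d Y_i+\langle H,\nabla Y_i\rangle$, even if established, cannot simply be integrated against the sign-changing factors $Y_i-\overline{Y_i}$; one-sided differential inequalities survive integration only against multipliers of a fixed sign. Moreover, the identities you invoke (the Takahashi-type formula and $\sum_i|\nabla Y_i|^2=\chd^2(r)$) are exact only in constant curvature and naturally live in the Lorentzian signature; in a general $N$ with $K\le\d$ the needed comparison versions do not come with the right signs --- this is essentially why Heintze's question remained open beyond the El Soufi--Ilias case $N=\H^{n+k}$. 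The paper circumvents both issues by using two separately signed ingredients, the divergence inequality \eqref{ineq-divM-top} integrated via the divergence theorem (yielding the key Lemma \ref{lem-l2-mean-curv-key}) and the gradient bound \eqref{ineq-nabla-coordinate-f} in the Rayleigh quotient, glued together through Proposition \ref{prop-sharp-l2-mean-curv}. Your rigidity discussion inherits these gaps, and its conclusion (constant curvature $\d$ along $\gamma$, a totally geodesic $2$-plane) is stronger than what the argument could deliver; the paper's equality analysis instead shows $X^\top\equiv0$ and $H$ parallel to $X$, i.e.\ $\gamma$ lies in a metric sphere of radius $\operatorname{arcsh}_\d\frac{l}{2\pi}$ and is a geodesic of it.
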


Let $\gamma$ be a null-homotopic regular closed curve in a $2$-manifold with constant curvature $\d<0$. As one of the main results in \cite{Langer-Singer1984}, an optimal lower bound of the total squared geodesic curvature along $\gamma$ was proved by Langer and Singer, 
\begin{equation}\label{ineq-Langer-Singer}
\int_\gamma \kappa^2 ds\ge 4\pi \sqrt{-\d}.
\end{equation}
They conjectured in the same paper that \eqref{ineq-Langer-Singer} generally holds for null-homotopic curves in a negatively curved manifold, i.e.,

\textit{Let $N$ be a Riemannian manifold with sectional curvature
bounded above by $\d<0$. If there exists a regular closed curve $\gamma$ in $N$ satisfying $\int_\gamma \kappa^2 ds< 4\pi \sqrt{-\d}$, then $N$ cannot be simply connected.}

Ulrich Pinkall proved \eqref{ineq-Langer-Singer} in hyperbolic spaces of any dimension \cite{Langer-Singer1984}. Theorem \ref{thm-total-squared-curvature} directly implies \eqref{ineq-Langer-Singer} for curves with length $\le \frac{2\pi}{\sqrt{-\d}}$ in a Cartan-Hadamard manifold. When $l>\frac{2\pi}{\sqrt{-\d}}$, by Proposition \ref{pro-bound-large-length} below, $\int_{\gamma}\kappa^2 ds> \frac{8\pi^2}{l}$. 

We conjecture that Langer and Singer's original statement holds, which now can be reformed as follows. 

\begin{conjecture}\label{conj-main}
	Any null-homotopic regular closed curve $\gamma$ of length $>\frac{2\pi}{\sqrt{-\d}}$ in a manifold with sectional curvature $\le \d<0$ satisfies $\int_\gamma \kappa^2 ds\ge 4\pi \sqrt{-\d}$.
\end{conjecture}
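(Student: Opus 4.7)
The plan is to combine the Reilly--Hersch variational method behind Theorem \ref{thm-total-squared-curvature} with a Gauss--Bonnet argument on a minimal filling disk, and then to confront a degeneration that I expect to be the central difficulty. Since $\gamma$ is null-homotopic, it lifts to a closed curve in the universal cover, so one may assume $N$ itself is a Cartan--Hadamard manifold of sectional curvature $\le \d$. Solve the Plateau problem in $N$ to obtain an immersed minimal disk $D$ spanning $\gamma$; by the Gauss equation together with the minimality of $D$, its intrinsic Gauss curvature satisfies $K_D \le \d$.

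Applied to $D$, the Gauss--Bonnet formula gives
\[
\int_\gamma \kappa_g\, ds \;=\; 2\pi - \int_D K_D\, dA \;\ge\; 2\pi + (-\d)\,A,
\]
where $A$ denotes the area of $D$ and $\kappa_g$ is the geodesic curvature of $\gamma$ as a curve in $D$. Since $|\kappa_g| \le \kappa$ and by Cauchy--Schwarz,
\[
L \int_\gamma \kappa^2\, ds \;\ge\; \bigl(2\pi + (-\d)A\bigr)^2 \;\ge\; 8\pi\,(-\d)\,A
\]
by AM--GM. Consequently the conjectured bound $4\pi\sqrt{-\d}$ would follow from an area estimate of the form $A \ge L/(2\sqrt{-\d})$, or more flexibly from any lower bound $(2\pi + (-\d)A)^2 \ge 4\pi L\sqrt{-\d}$.

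The principal obstacle is that this area estimate is false in general: a long null-homotopic curve can bound a minimal disk of arbitrarily small area, for instance a loop that traces back and forth along a thin geodesic tube. In this ``thin'' regime Gauss--Bonnet collapses to $\int \kappa_g\, ds \ge 2\pi$ and Cauchy--Schwarz yields only $\int \kappa^2\, ds \ge 4\pi^2/L$, which is weaker than $4\pi\sqrt{-\d}$ precisely when $L > 2\pi/\sqrt{-\d}$. To resolve this, I would try to show that whenever $A$ is small, $\gamma$ must admit a closed subloop $\gamma'$---obtained by gluing an arc of $\gamma$ to a short transverse geodesic in $D$---whose length can be arranged to be $\le 2\pi/\sqrt{-\d}$; then Theorem \ref{thm-total-squared-curvature} applied to $\gamma'$ would yield the required $4\pi\sqrt{-\d}$, provided the geodesic-bridging cost can be absorbed.

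An alternative strategy is to adapt Pinkall's hyperbolic-space proof of \eqref{ineq-Langer-Singer}: push $\gamma$ into the model space $\H^n_\d$ via a Rauch-type comparison map and invoke the conformal Gauss map of the principal normal, for which $\int_\gamma (\kappa^2 - (-\d))\, ds$ has a topological lower bound. The delicate point here is that Rauch comparison controls distances along radial geodesics but not the second-order data $\kappa^2$ along $\gamma$, so a Jacobi-field estimate of the comparison map would be needed to bound the change in total squared curvature. In either route, handling the thin-filling regime remains the decisive difficulty.
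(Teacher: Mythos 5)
The statement you are trying to prove is Conjecture \ref{conj-main}: the paper offers no proof of it, and in fact explicitly remarks (after Proposition \ref{prop-curve-case-*}) that the loss of rigidity for long curves is exactly why its comparison method cannot settle this conjecture; the best the paper obtains for $l>\frac{2\pi}{\sqrt{-\d}}$ is the weaker bound $\int_\gamma\kappa^2\,ds>\frac{8\pi^2}{l}$ of Proposition \ref{pro-bound-large-length}. Your proposal likewise does not close the problem, and you are candid about where it breaks: the Gauss--Bonnet/Plateau route needs an area lower bound $A\ge \frac{L}{2\sqrt{-\d}}$ for a minimal filling disk, which fails for long curves bounding thin disks, and this ``thin-filling'' regime is precisely the regime of the conjecture (for $L\le \frac{2\pi}{\sqrt{-\d}}$ the result is already Theorem \ref{thm-total-squared-curvature}).

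The concrete gaps in your repair attempts are these. First, the subloop extraction is not an argument yet: you give no proof that small filling area forces the existence of a closed subloop of length $\le\frac{2\pi}{\sqrt{-\d}}$ obtained by bridging with a short geodesic arc in $D$ (a long curve running along a single geodesic segment has small-area fillings but no obvious short subloop with controlled bridge). Second, even granting such a $\gamma'$, it is only piecewise smooth, so Theorem \ref{thm-total-squared-curvature} does not apply as stated, and the transfer goes the wrong way: smoothing the corners \emph{adds} curvature concentrated near the corners, so the lower bound $\int_{\gamma'}\kappa^2\ge 4\pi\sqrt{-\d}$ for the modified loop does not imply anything about $\int_\gamma\kappa^2$, since $\int_{\gamma'}\kappa^2$ is not dominated by $\int_\gamma\kappa^2$; ``absorbing the bridging cost'' is exactly the missing idea, not a routine step. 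Third, the Pinkall-style alternative is only named: a Rauch-type comparison map to $\H^{n}_\d$ controls lengths and distances, but there is no mechanism in your sketch that prevents it from increasing the squared curvature of the image curve, so the known bound \eqref{ineq-Langer-Singer} in the model space does not pull back. As it stands, the proposal is a reasonable survey of strategies for an open problem, but each route stops at the decisive difficulty rather than resolving it.
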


\section{Proof of Theorem \ref{thm-sharp-lambda1}}

Let $\d<0$ be a negative real number. Throughout the paper we use  $\shd(r)=\frac{1}{\sqrt{-\d}}\sinh(\sqrt{-\d}r)$ to denote the $\d$-hyperbolic sine, $\chd(r)=\shd'(r)$ the $\d$-hyperbolic cosine, and $\thd(r)=\frac{\shd(r)}{\chd(r)}$ the $\d$-hyperbolic tangent function. They satisfy the identities
$\chd'=-\d \shd$, $\chd^2+\d\shd^2=1$ and $1+\d\thd^2=\frac{1}{\chd^2}$. 

The key ingredient in the proof of Theorem \ref{thm-sharp-lambda1} is an optimal lower bound on the Willmore energy in terms of the $L^2$ integral of $\d$-hyperbolic tangent $\thd(r)$.

\begin{pro}[$L^2$ lower bound of mean curvature]\label{prop-sharp-l2-mean-curv}
	Let $M$ be an immersed closed $n$-submanifold of co-dimension $k\ge 1$ in a normal ball $B(p,R)$ of a Riemannian manifold $N$, where the sectional curvature of $B(p,R)$ satisfies $K_{B(p,R)}\le \d<0$. Let $H$ be the mean curvature vector of $M$ in $N$, and Let $r(x)=d(p,x)$ be distance function to $p$ in $N$. Then 
	\begin{equation}\label{ineq-sharp-l2-mean-curv}
	\int_M |H|^2 \dvol \cdot \int_M \thd^2(r) \dvol\ge \Vol^2(M),
	\end{equation}
	if one of the followings holds,
	\begin{enumerate}\numberwithin{enumi}{theo}
		\item\label{prop-sharp-l2-1} either $n\ge 2$, or
		\item\label{prop-sharp-l2-2} $n=1$ and $\shd(R)\le \frac{1}{\sqrt{-\d}}$ ( equivalently, $\sqrt{-\d}\cdot R\le \frac{1}{2}\ln \(3+2\sqrt{2}\)$).
	\end{enumerate} 

	Moreover, equality in \eqref{ineq-sharp-l2-mean-curv} implies that $M$ is minimally immersed in a metric sphere of radius $R_0$, where $\thd^2(R_0)=\frac{\Vol M}{\int_M |H|^2\dvol}$.
\end{pro}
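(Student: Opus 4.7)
The strategy is to derive the sharp $L^2$ lower bound on $|H|$ from a Bochner--Reilly type integral identity on $M$, using a radial test vector field on $N$ together with the Cartan--Hadamard Hessian comparison. Since the sectional curvature of $N$ is $\le \d$ on $B(p,R)$, the standard comparison theorem gives the pointwise lower bounds
\[ \Hess \chd(r) \ge -\d\,\chd(r)\, g \quad\text{and}\quad \Hess r \ge \frac{\chd(r)}{\shd(r)}\bigl(g - dr\otimes dr\bigr)\ \text{on}\ \{r>0\}. \]

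\textbf{Divergence identity and integration.} Consider the vector field $X = \shd(r)\,\nabla r$ on $N$. For any orthonormal frame $\{e_i\}_{i=1}^n$ of $T_xM$, a direct computation gives
\[ \sum_{i=1}^n \langle \nabla_{e_i} X, e_i\rangle = \chd(r)\,|(\nabla r)^T|^2 + \shd(r)\,\tr_M \Hess r \ge n\,\chd(r), \]
the tangential contribution cancelling exactly because $\shd(r)\cdot \chd(r)/\shd(r) = \chd(r)$. Integrating over the closed manifold $M$ using the divergence-type identity $\int_M \sum_i \langle \nabla_{e_i} X, e_i\rangle = -n\int_M \langle H, X\rangle$ and bounding $|\langle H, \nabla r\rangle|\le |H|$ (since $|\nabla r|=1$) yields the key inequality
\[ \int_M \chd(r)\,\dvol \le -\int_M \shd(r)\langle H,\nabla r\rangle\,\dvol \le \int_M \shd(r)\,|H|\,\dvol. \]

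\textbf{Sharp Cauchy--Schwarz and equality.} Since $\chd \ge 1$, the bound above implies $\Vol M \le \int_M \chd(r)\thd(r)\,|H|\,\dvol$. The target inequality then follows from a sharp weighted Cauchy--Schwarz, exploiting the refined estimate $|\langle H, \nabla r\rangle|\le |H|\sqrt{1-|(\nabla r)^T|^2}$ (valid because $H\perp T_xM$, so only the normal component of $\nabla r$ pairs with $H$) to introduce the $\thd^2$ weight on the $|H|^2$ integral rather than the naive $\shd^2$ weight. For the equality characterisation, equality in the Hessian comparison forces the sectional curvature of $N$ to be $\d$ on the relevant $2$-planes; equality in $|\langle H, \nabla r\rangle|\le |H|$ together with $H\perp T_xM$ forces $(\nabla r)^T \equiv 0$, so $r$ is constant, say $r \equiv R_0$ on $M$, meaning $M$ lies in the geodesic sphere $S_{R_0}$; and equality in Cauchy--Schwarz forces $|H|$ to be constant, equal to $1/\thd(R_0)$, showing that $M$ is minimally immersed in $S_{R_0}$ and $\thd^2(R_0) = \Vol M/\int_M|H|^2\,\dvol$.

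\textbf{Main obstacle.} The most delicate step is the passage from $\int_M \chd(r)\,\dvol\le\int_M \shd(r)\,|H|\,\dvol$ to the sharp $\thd^2$ form, since a direct Cauchy--Schwarz would yield only the weaker bound $(\Vol M)^2 \le \int_M \shd^2(r)\,\dvol \cdot \int_M |H|^2\,\dvol$. Upgrading $\shd^2$ to $\thd^2$ requires carefully tracking the tangential component $|(\nabla r)^T|^2$ in a weighted Cauchy--Schwarz estimate, and this is precisely where the hypothesis $n\ge 2$ enters: the availability of tangent directions beyond the possible radial $(\nabla r)^T$ provides the geometric room needed to close the weighted inequality. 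The smallness requirement in case~\ref{prop-sharp-l2-2} shows this dimensional freedom is essential---without it, as for large curves in $\H^2$, the sharp inequality genuinely fails.
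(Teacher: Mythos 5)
There is a genuine gap, and you have in fact located it yourself: the step you label the ``main obstacle'' --- upgrading the $\shd^2$ weight to the $\thd^2$ weight --- is the entire content of the proposition, and your proposal only announces an intention to do it (``requires carefully tracking the tangential component in a weighted Cauchy--Schwarz estimate'') without supplying any argument. Concretely, from your inequality $\int_M \chd(r)\le \int_M \shd(r)|H|\,|(\nabla r)^\perp|$, a weighted Cauchy--Schwarz produces $\int_M\thd^2\cdot\int_M|H|^2$ on the right only if $\chd^2(r)\,|(\nabla r)^\perp|^2\le 1$ pointwise, which is false in general; otherwise you land back on the $\shd^2$ bound you already concede is too weak. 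Your heuristic that $n\ge 2$ enters through ``the availability of tangent directions'' does not correspond to any step of the estimate: in the actual proof $n$ enters purely through the coefficient $\d/n$ in an integral inequality, via an algebraic case analysis ($n\ge4$ versus $n=2,3$ versus $n=1$), not through any pointwise geometric freedom.

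The source of the difficulty is your choice of test field $X=\shd(r)\nabla r$. With $f=\shd$ the coefficient $f'-f\,\chd/\shd$ in the divergence inequality vanishes identically, so the resulting integrated inequality carries no information about $|X^\top|^2$ and its natural weight is $\shd^2$. The paper instead takes $f=\thd$, so that $X=\thd(r)\nabla r$ satisfies $\operatorname{div}_M(X^\top)\ge n+n\langle X,H\rangle+\d|X^\top|^2$; integrating and applying Cauchy--Schwarz gives the key inequality
\[
\|X^\perp\|_2\,\|H\|_2\;\ge\;1+\tfrac{\d}{n}\|X^\top\|_2^2 ,
\]
in which the negative term $\tfrac{\d}{n}\|X^\top\|_2^2$ must be traded off against the deficit $\|X^\perp\|_2\le\|X\|_2$. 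Closing this trade-off is where all the work lies: for $n\ge4$ one squares and uses a rough a priori lower bound on $\|H\|_2$; for $n=2,3$ one shows $\|X\|_2+\|X^\perp\|_2\le\frac{n}{-\d}\|H\|_2$ and exploits monotonicity of $\varphi_h(x)=(h+\frac{\d}{n}x)x$ on the relevant interval; for $n=1$ the same scheme works only under the smallness hypothesis $\shd(R)\le 1/\sqrt{-\d}$. None of this appears in your proposal, so the proof as written does not establish the inequality. (Your equality discussion has the same defect, and additionally the implication ``equality forces $(\nabla r)^\top\equiv0$'' requires $H\neq0$ pointwise, which you have not justified.)
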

\begin{rem}
	If $M$ is an immersed hypersurface, then it can be easily seen from the proof of Proposition \ref{prop-sharp-l2-mean-curv} and the rigidity of Hessian comparison for distance functions that the equality of \eqref{ineq-sharp-l2-mean-curv} or \eqref{ineq-Heintze} implies that $M$ is an embedded sphere of constant curvature $\frac{1}{\shd^2(R_0)}=\d + \frac{1}{\Vol M}\int |H|^2$, and the domain enclosed by $M$ in $N$ is a ball of constant curvature $-\d$; cf. \cite{Hu-Xu2019,Hu-Xu2020}.
\end{rem}

By assuming Proposition \ref{prop-sharp-l2-mean-curv}, let us first prove Theorem \ref{thm-sharp-lambda1}. Let $M$ be a closed $n$-submanifold immersed in a convex domain $U$ in a complete Riemannian $(n+k)$-manifold $N$, where the upper curvature bound of $U$, $\d=\sup_{U}K_U\le 0$.
Let $\mathcal{F}:U\ra \R$ be an energy function defined by
\begin{align*}
\mathcal{F}(q):=\int_{M} \Phi_\d(d(q,x)) \dvol(x),
\end{align*}
where $d(q,x)$ is the distance between $q$ and $x$ in $U$, and $\Phi_\d$ is the modifier defined by
\begin{equation*}
\Phi_\d(r):=\int_0^r \thd(s)ds.
\end{equation*}

By the standard Hessian comparison for distance functions, every $\Phi_\d(d(x,\cdot))$ is convex in $U$. Hence $\mathcal{F}$ is a strictly convex function on $U$, which admits a unique interior minimum point $p_0$ in $U$.
We call $p_0\in N$ the {\em center of mass} of $M$ with respect to the modified distance $\Phi_\d(r)$.

Note that the gradient of $\mathcal{F}$ has expression $\nabla^N \mathcal F(q)=-Y(q)$, where
\begin{align}\label{3.6}
Y(q):=\int_M \frac{\thd(r_q)}{r_q}\exp_q^{-1}(x)\in T_qU,
\end{align}
and $r_q(x)=d(x,q)$.
Since $\nabla^N \mathcal F(p_0)=0$, in the normal coordinates $\{x_1,\cdots,x_{n+k}\}$ of $p_0$, \eqref{3.6} becomes
\begin{align}\label{center-mass-test-func}
\int_M \frac{\thd(r)}{r}x_i=0, \qquad i=1,\dots,n+k,
\end{align}
where $r(x)=d(x,p_0)$. For simplicity we have dropped $\dvol$ in the expression of integrals over $M$.

Let the {\em position vector field} $X$ along $M$ about $p_0$ defined to be the gradient of $\Phi_\d(r)$, i.e.,
\begin{equation}\label{ineq-def-position-vec}
X=\nabla^N\Phi_\d(r)=\thd(r)\nabla^Nr=\frac{\shd(r)}{\chd(r)}\nabla^Nr.
\end{equation} 
Then by \eqref{center-mass-test-func}, each component of $X$, $\frac{\thd(r)}{r}x_i$, provides a test function of $\lambda_1(M)$. We will use the renormalized $L^2$ norm of $X$ defined by 
\begin{equation}\label{def-renorm-l2}
\|X\|_2=\(\frac{1}{\Vol(M)}\int_M |X|^2 \)^{\frac{1}{2}}.
\end{equation}

The following lemma will be applied in the proofs below, where
\eqref{ineq-divM-top} is for Proposition \ref{prop-sharp-l2-mean-curv}, and \eqref{ineq-nabla-coordinate-f} for Theorem \ref{thm-sharp-lambda1}.
\begin{lem}[cf. \cite{Heintze1988}]\label{lem-tech-f} Let the assumptions be as in Proposition \ref{prop-sharp-l2-mean-curv}. Let $f(r)$ be a smooth function. Then
	\begin{equation}\label{ineq-divM-top}
	\divv_M \(f\nabla^N r\)^\top \geq n f \cdot \frac{\chd(r)}{\shd(r)}+nf\langle \nabla^N r,H\rangle +\(f'-f \cdot \frac{\chd(r)}{\shd(r)}\) |\nabla^M r|^2,
	\end{equation}
	where $\(f\nabla^N r\)^\top$ is the component tangent to $M$ and the {\em divergence} of a vector field $Y$ along $M$ is defined by
	$$
	\divv_M Y(x):=\sum_{i=1}^{n}\langle \nabla^N_{e_i}Y,e_i\rangle, \; \forall x\in M  \text{ and an orthonormal basis $e_1,\cdots,e_n$ of $T_x M$}.
	$$
	
	\begin{equation}\label{ineq-nabla-coordinate-f}
	\sum_{i=1}^{n+k}\left|\nabla^M \(\frac{f(r)}{r}x_i\)\right|^2 \leq n\frac{f^2}{\shd^2}+
	\(\(f'\)^2-\frac{f^2}{\shd^2}\) \left|\nabla^M r\right|^2.
	\end{equation}
\end{lem}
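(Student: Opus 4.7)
Both inequalities come from standard comparison geometry in the negatively curved ball $B(p,R)$. Fix $x\in M$ and an orthonormal basis $e_1,\dots,e_n$ of $T_xM$.

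For \eqref{ineq-divM-top}, a direct computation using $e_i(f)=f'(r)\langle\nabla^Nr,e_i\rangle$ and the Hessian $\Hess^Nr(e_i,e_i)=\langle\nabla^N_{e_i}\nabla^Nr,e_i\rangle$ gives
\[
\divv_M(f\nabla^Nr)=f'|\nabla^Mr|^2+f\cdot\tr_M\Hess^Nr.
\]
Because $(f\nabla^Nr)^\perp$ is normal to $M$ and $\sum_i\alpha(e_i,e_i)=nH$,
\[
\divv_M(f\nabla^Nr)^\perp=-\sum_i\langle(f\nabla^Nr)^\perp,\alpha(e_i,e_i)\rangle=-nf\langle\nabla^Nr,H\rangle,
\]
so subtracting yields $\divv_M(f\nabla^Nr)^\top=f'|\nabla^Mr|^2+f\cdot\tr_M\Hess^Nr+nf\langle\nabla^Nr,H\rangle$. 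The Hessian comparison for $K_N\le\delta<0$ gives $\Hess^Nr(V,V)\ge\frac{\chd(r)}{\shd(r)}(|V|^2-\langle V,\nabla^Nr\rangle^2)$, hence $\tr_M\Hess^Nr\ge\frac{\chd}{\shd}(n-|\nabla^Mr|^2)$; substituting (with $f\ge 0$, which holds in the intended application $f=\thd(r)$) produces \eqref{ineq-divM-top}.

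For \eqref{ineq-nabla-coordinate-f}, I work in normal coordinates $\{x_1,\dots,x_{n+k}\}$ at $p$, where $r^2=\sum x_i^2$, $\nabla^Nr=\sum(x_i/r)\partial_i$ by Gauss' lemma, and $\sum x_i\nabla^Nx_i=r\nabla^Nr$. Setting $g_i=\frac{f(r)}{r}x_i$, expanding $e_j(g_i)=\frac{rf'-f}{r^2}(e_jr)\,x_i+\frac{f}{r}e_j(x_i)$, squaring and summing first over $i$ (using $\sum x_i^2=r^2$ and $\sum x_ie_j(x_i)=re_j(r)$) then over $j$, the cross term collapses via $(rf'-f)^2+2(rf'-f)f=(rf')^2-f^2$ to produce the clean identity
\[
\sum_{i=1}^{n+k}|\nabla^Mg_i|^2=\Bigl((f')^2-\tfrac{f^2}{r^2}\Bigr)|\nabla^Mr|^2+\tfrac{f^2}{r^2}\sum_{i=1}^{n+k}|\nabla^Mx_i|^2.
\]
It remains to bound $\sum_i|\nabla^Mx_i|^2$ from above. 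For each unit $e_j\in T_xM$, Gauss' lemma identifies the $N$-radial part of $e_j$ with its Euclidean-radial component in the coordinate frame, while Rauch comparison (using $K_N\le\delta$) forces $|e_j^\perp|^2_N\ge\frac{\shd^2(r)}{r^2}|e_j^\perp|^2_{\mathrm{Euc}}$ on directions perpendicular to $\nabla^Nr$; combined with $|e_j|_N=1$ this yields $|e_j^\perp|^2_{\mathrm{Euc}}\le\frac{r^2}{\shd^2}(1-(e_jr)^2)$. Adding $(e_jr)^2$ to both sides and summing over $j$, using $\sum_j(e_jr)^2=|\nabla^Mr|^2$,
\[
\sum_{i=1}^{n+k}|\nabla^Mx_i|^2=\sum_{j=1}^n|e_j|^2_{\mathrm{Euc}}\le n\tfrac{r^2}{\shd^2}+\Bigl(1-\tfrac{r^2}{\shd^2}\Bigr)|\nabla^Mr|^2.
\]
Plugging into the identity above cancels the $\frac{f^2}{r^2}|\nabla^Mr|^2$ contribution and produces \eqref{ineq-nabla-coordinate-f}. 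I expect the main obstacle to be this last step: executing Rauch comparison in the coordinate frame so that the non-Euclidean factor $r^2/\shd^2$ enters with the right sign and combines with $f^2/r^2$ to leave exactly $nf^2/\shd^2$ plus $((f')^2-f^2/\shd^2)|\nabla^Mr|^2$; the rest is routine algebra.
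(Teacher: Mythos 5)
Your proposal is correct and follows essentially the same route as the paper, which omits the proof and defers to exactly these calculations in Heintze's Lemmas 2.4 and 2.7: the tangential/normal divergence split plus the Hessian comparison $\Hess^N r\ge \frac{\chd}{\shd}\(g-dr\otimes dr\)$ for \eqref{ineq-divM-top}, and the normal-coordinate expansion of $\nabla^M\(\frac{f(r)}{r}x_i\)$ combined with the Rauch-type bound $|v^\bot|^2_{\mathrm{Euc}}\le \frac{r^2}{\shd^2(r)}|v^\bot|^2_N$ for \eqref{ineq-nabla-coordinate-f}. Your remark that \eqref{ineq-divM-top} needs $f\ge 0$ is accurate — the statement implicitly assumes it, and the only application is $f=\thd\ge 0$ — so there is no gap.
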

Since Lemma \ref{lem-tech-f} follows directly from those calculations in the proof of \cite[Lemmas 2.4, 2.7]{Heintze1988}, we omit its proof.

\begin{proof}[Proof of Theorem \ref{thm-sharp-lambda1}]
	~
	
	Let $p_0$ be the center of mass of $M$ with respect to the modified distance $\Phi_\d(r)$. Then $p_0\in B(p,R)$. Let $X$ be the position vector field along $M$ about $p_0$ given in \eqref{ineq-def-position-vec}.
	
	Let us apply Rayleigh's principle to each component of $X$ and then sum them together, 
	\begin{align}\label{1st-eigen-test}
	\l_1 \int_M |X|^2=\l_1 \int_M \sum_{i=1}^{n+k}\left(\frac{\thd(r)}{r} x_i\right)^2 \leq & \int_M \sum_{i=1}^{n+k}\left|\nabla^M \frac{\thd(r)}{r} x_i\right|^2.
	\end{align}
	
	At the same time, let us take $f=\thd$ in \eqref{ineq-nabla-coordinate-f}. Then
	\begin{equation}\label{ineq-nabla-test-func-est}
	\sum_{i=1}^{n+k}\left|\nabla^M \(\frac{\thd(r)}{r}x_i\) \right|^2\leq \frac{n}{\chd^2(r)} = n+ n\d\thd^2(r).
	\end{equation}
	
	Combining \eqref{1st-eigen-test} and \eqref{ineq-nabla-test-func-est} with \eqref{ineq-sharp-l2-mean-curv}, we derive \eqref{ineq-Heintze} by
	\begin{equation}\label{ineq-1st-eigen-mean-curv}
	\lambda_1\|X\|_2^2 \le n \(\d \|X\|_2^2+1\)\le n\(\d+\|H\|_2^2\)\|X\|_2^2.
	\end{equation}
		
	If equality holds in \eqref{ineq-Heintze}, then so is \eqref{ineq-sharp-l2-mean-curv}. It follows from Proposition \ref{prop-sharp-l2-mean-curv} that $M$ is minimally immersed in the metric sphere of radius $\operatorname{arcsh}_\d\sqrt{\frac{n}{\lambda_1(M)}}$.
	
	Now the proof of Theorem \ref{thm-sharp-lambda1} is complete.
\end{proof}

\section{Lower bound of Willmore energy}
In this section let us prove Proposition \ref{prop-sharp-l2-mean-curv}.

Let $X$ be the position vector defined by \eqref{ineq-def-position-vec}. Since $\(\nabla^N r\)^\top=\nabla^Mr$, the component of $X$ tangent to $M$ is $X^\top=\thd(r)\nabla^Mr$.
Let us take $f=\thd$ in \eqref{ineq-divM-top}, then it is transformed to
\begin{equation}\label{ineq-div-position}
\operatorname{div}_M (X^\top)\ge n+n\left<X,H\right>+\delta \left|X^\top\right|^2.
\end{equation}
Integrating \eqref{ineq-div-position}, by the divergence theorem we have
\begin{equation}\label{ineq-int-div-position}
\int_M \left<-X^\bot,H\right> \ge \Vol M +\frac{\d}{n}\int_M|X^\top|^2.
\end{equation}

By the Cauchy-Schwartz inequality, \eqref{ineq-int-div-position} yields the following key estimate in the proof of Proposition \ref{prop-sharp-l2-mean-curv}.
\begin{lem}[Key lemma]\label{lem-l2-mean-curv-key}
	\begin{align}\label{ineq-l2-mean-curv-key}
	\|X^\bot\|_2\cdot \|H\|_2 &\ge 1+\frac{\d}{n}\|X^\top\|_2^2,
	\end{align}
	where the renormalized $L^2$ norm of vector fields defined by \eqref{def-renorm-l2} are used.
\end{lem}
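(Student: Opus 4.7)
The Key Lemma follows almost immediately from the integrated divergence identity \eqref{ineq-int-div-position} together with the Cauchy--Schwarz inequality, so the plan is essentially bookkeeping rather than the invention of new machinery.

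First, I would observe that the mean curvature vector $H$ is normal to $M$, hence $\langle X, H\rangle = \langle X^\bot, H\rangle$; in particular the pairing on the left-hand side of \eqref{ineq-int-div-position} genuinely involves only the normal component of the position vector, which is what eventually appears in \eqref{ineq-l2-mean-curv-key}. Next, I would apply Cauchy--Schwarz pointwise, $\langle -X^\bot, H\rangle \le |X^\bot|\cdot |H|$, and then Cauchy--Schwarz in $L^2(M, \dvol)$ to obtain
\begin{equation*}
\int_M \langle -X^\bot, H\rangle \,\dvol \;\le\; \left(\int_M |X^\bot|^2\,\dvol\right)^{1/2} \left(\int_M |H|^2\,\dvol\right)^{1/2}.
\end{equation*}

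Now I would combine this with \eqref{ineq-int-div-position}, which gives
\begin{equation*}
\left(\int_M |X^\bot|^2 \dvol\right)^{1/2}\!\!\left(\int_M |H|^2 \dvol\right)^{1/2} \ge \Vol M + \frac{\d}{n}\int_M |X^\top|^2 \dvol.
\end{equation*}
Dividing both sides by $\Vol M$ and recognizing the renormalized $L^2$ norms from the definition \eqref{def-renorm-l2}, the right-hand side becomes $1 + \frac{\d}{n}\|X^\top\|_2^2$ while the left-hand side becomes $\|X^\bot\|_2 \cdot \|H\|_2$, which is precisely \eqref{ineq-l2-mean-curv-key}.

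Since $\d < 0$, the right-hand side is not automatically nonnegative, so no sign ambiguity arises when taking square roots on the left (both factors there are nonnegative by definition). The only step that requires any care is really confirming that \eqref{ineq-int-div-position} can indeed be fed into Cauchy--Schwarz in the stated form, i.e.\ that the left-hand side of \eqref{ineq-int-div-position} is nonnegative whenever the right-hand side is; this is automatic once one divides by $\Vol M$ and rewrites in terms of the norms. I do not expect any substantive obstacle here: the content of the lemma is entirely in the identity \eqref{ineq-int-div-position} derived in the previous step, and Cauchy--Schwarz packages it into the symmetric form needed for the subsequent application in the proof of Proposition \ref{prop-sharp-l2-mean-curv}.
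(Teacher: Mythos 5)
Your proof is correct and follows exactly the paper's route: the lemma is obtained from the integrated divergence inequality \eqref{ineq-int-div-position} by applying the Cauchy--Schwarz inequality (pointwise and then in $L^2$) and dividing by $\Vol M$ to pass to the renormalized norms \eqref{def-renorm-l2}. Your remark that chaining the two inequalities is legitimate regardless of the sign of $1+\frac{\d}{n}\|X^\top\|_2^2$ is also accurate, since Cauchy--Schwarz bounds the integral from above unconditionally.
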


	Since $\d\le 0$ and $|X|\le \thd(R)$, 
	$$\thd(R)\cdot \|H\|_2 \ge 1+\frac{\d}{n} \thd^2(R),$$
	a rough lower bound of $\|H\|_2$ follows from Lemma \ref{lem-l2-mean-curv-key}
	\begin{equation}\label{ineq-l2-mean-curv-rough}
		\|H\|_2 \ge \frac{1}{\thd(R)} +  \frac{\d}{n}\thd(R).
	\end{equation}
	Note that, for $\d\le 0$ and $n\ge 1$, $$\frac{1}{\thd(R)} +  \frac{\d}{n}\thd(R)\ge \frac{1}{\thd(R)} +  \d\thd(R)=\frac{1}{\shd(R)\chd(R)}>0,$$ which implies that the rough lower bound always makes sense.	

Proposition \ref{prop-sharp-l2-mean-curv} can be viewed as a sharp lower bound estimate \eqref{ineq-sharp-l2-mean-curv} improved from \eqref{ineq-l2-mean-curv-rough}. In the following we write \eqref{ineq-sharp-l2-mean-curv} in the form $\|X\|_2^2 \cdot \|H\|_2^2\ge 1$.

\begin{proof}[Proof of Proposition \ref{prop-sharp-l2-mean-curv}]
~

Let us first observe the right hand side of \eqref{ineq-l2-mean-curv-key} is always positive,
\begin{align*}
1+\frac{\d}{n}\|X^\top\|_2^2
\ge 1+\frac{\d}{n} \thd^2(R)
 \ge \frac{1+\d\thd^2(R)}{n}=\frac{1}{n\chd^2(R)}>0,
\end{align*}
which implies that inequality is preserved after taking square of both sides of \eqref{ineq-l2-mean-curv-key}, i.e.,
$$\(\|X\|_2^2-\|X^\top\|_2^2\) \cdot \|H\|_2^2\ge  \(1+\frac{\d}{n}\|X^\top\|_2^2\)^2\ge 1+2\frac{\d}{n}\|X^\top\|_2^2,$$
Hence we have
\begin{align*}
\|X\|_2^2 \cdot \|H\|_2^2 &\ge 1+\(2\frac{\d}{n}+\|H\|_2^2\)\|X^\top\|_2^2\\
& \ge 1+\(2\frac{\d}{n}+\(\frac{1}{\thd(R)} +  \frac{\d}{n}\thd(R)\)^2\)\|X^\top\|_2^2. \qquad \text{by \eqref{ineq-l2-mean-curv-rough}}
\end{align*}
Note that for $n\ge 4$,
\begin{align*}
2\frac{\d}{n}+\(\frac{1}{\thd(R)} +  \frac{\d}{n}\thd(R)\)^2 & =4\frac{\d}{n}+\frac{1}{\thd^2(R)} +  \frac{\d^2}{n^2}\thd^2(R)\\
& \ge \d +\frac{1}{\thd^2(R)}=\frac{1}{\shd^2(R)}>0.
\end{align*}
Thus we derive $\|X\|_2^2 \cdot \|H\|_2^2\ge 1$ for $n\ge 4$.

More careful investigation is required for $n=2$ and $3$. Let us reform \eqref{ineq-l2-mean-curv-key} to 
$$\|X^\bot\|_2\cdot \|H\|_2\ge 1+\frac{\d}{n}\(\|X\|_2^2-\|X^\bot\|_2^2\).$$ Then we have
$$\(\|H\|_2+\frac{\d}{n}\|X^\bot\|_2\)\|X^\bot\|_2\ge 1+\frac{\d}{n}\|X\|_2^2.$$

Note that when $\d< 0$, $\varphi_h(x)=(h+\frac{\d}{n}x)x$ is increasing for $x\le \frac{nh}{-2\d}$.
 We claim that for $n\ge 2$,
\begin{equation}\label{ineq-l2-mean-curv-claim}
\|X\|_2+\|X^\bot\|_2\le \frac{n}{-\d}\|H\|_2 \qquad \text{(and thus $\|X^\bot\|_2\le \frac{n}{-2\d}\|H\|_2$)}.
\end{equation}  
Since $\varphi_{h}(x)$ for $h=\|H\|_2$ takes minimum at the boundary points of the interval $\left[\|X^\bot\|_2, \frac{nh}{-\d}-\|X^\bot\|_2\right],$ and by \eqref{ineq-l2-mean-curv-claim}, $\|X\|_2$ lies in the interval,
$$\|H\|_2\cdot \|X\|_2+\frac{\d}{n}\|X\|_2^2\ge \|H\|_2\cdot \|X^\bot\|_2+\frac{\d}{n}\|X^\bot\|_2^2\ge  1+\frac{\d}{n}\|X\|_2^2,$$
which implies $\|H\|_2\cdot \|X\|_2\ge 1$.

We now verify the claim \eqref{ineq-l2-mean-curv-claim}. Indeed, for $n\ge 2$ we have
$$-\d \|X\|_2^2-1\le -\d \thd^2(R)-1=-\frac{1}{\chd^2(R)}<0\le \frac{n}{2}-1.$$
It follows that
$$\|X\|_2\cdot \|X^\bot\|_2\le \|X\|_2^2 \le \frac{n}{-2\d}.$$
Then we derive \eqref{ineq-l2-mean-curv-claim} by the inequalities below,
\begin{align}
\label{eq-prop-verification}
\frac{-\d}{n}\(\|X\|_2+\|X^\bot\|_2\)\cdot \|X^\bot\|_2 
& =  \frac{-\d}{n} \|X\|_2 \cdot \|X^\bot\|_2 + \frac{-\d}{n}\|X\|_2^2 +  \frac{\d}{n}\|X^\top\|_2^2 \\ \nonumber
& \le  1+\frac{\d}{n}\|X^\top\|_2^2\\
\nonumber
& \le \|H\|_2\cdot \|X^\bot \|_2 . \qquad \text{by \eqref{ineq-l2-mean-curv-key}}
\end{align}
Now the proof of Proposition \ref{prop-sharp-l2-mean-curv} for $n\ge 2$ is complete.

What remains is to prove for $n=1$ and $\shd(R)\le \frac{1}{\sqrt{-\d}}$. By setting $n=1$, it can be easily seen from the above verification of the claim that, together with \eqref{eq-prop-verification} and \eqref{ineq-l2-mean-curv-key}, the following condition
\begin{equation}\label{ineq-sharp-sufficient}
\|X\|_2 \cdot \|X^\bot\|_2 + \|X\|_2^2\le \frac{1}{-\d},
\end{equation}
implies \eqref{ineq-l2-mean-curv-claim}, and hence $\|X\|_2^2 \cdot \|H\|_2^2\ge 1$.

On the other hand, \eqref{ineq-sharp-sufficient} holds when $\chd^2(R)\le 2$, or equivalently $\shd(R)\le \frac{1}{\sqrt{-\d}}$. This is because  
$$2\|X\|_2^2\le 2\thd^2(R)=\frac{2-\frac{2}{\chd^2(R)}}{-\d}\le  \frac{1}{-\d}.$$ 

Thus
the inequality $\|X\|_2^2 \cdot \|H\|_2^2\ge 1$ holds for $n=1$ and $R$ satisfying $\shd(R)\le \frac{1}{\sqrt{-\d}}$, which is equivalent to $$\sqrt{-\d}\cdot R\le \frac{1}{2}\ln \frac{\sqrt{2}+1}{\sqrt{2}-1}=\frac{1}{2}\ln \(3+2\sqrt{2}\).$$

To complete the proof of Proposition \ref{prop-sharp-l2-mean-curv}, what remains is to check what happens when the inequalities above hold as equality. 

First, the auxiliary function $\varphi_h$ takes the same value on $\|X\|_2$ and $\|X^\bot\|_2$. Either $\|X\|_2=\|X^\bot\|_2$ or $\|X\|_2=\frac{nh}{-\d}-\|X^\bot\|_2$ implies that  $X\equiv X^\bot$ and $X^\top\equiv 0$ along $M$. Thus $X$ admits a constant norm and $M$ lies in a metric sphere $S$ centered at $p$.

At the same times, by the equality of Cauchy-Schwartz inequality, when \eqref{ineq-int-div-position} and \eqref{ineq-l2-mean-curv-key} take equalities, it follows that $H$ is parallel to $X$, which means that $M$ is also minimal in $S$.

Finally, since $|X|=\thd(r)=\thd(R_0)$ is a constant along $M$, the radius $R_0$ of $S$ satisfies $\thd^2(R_0)=\|X\|_2^2=\frac{1}{\|H\|_2^2}$.

\end{proof}

\section{Total squared curvature of curves}

In this section we analyze when \eqref{ineq-total-squared-curvature} holds for a regular closed curve in a manifold of negative curvature.

First, let us observe that, if a regular closed curve $\gamma$ lies in a normal ball $B(p,R)$ with $R\le  \operatorname{arcsh}_\d(1/\sqrt{-\d})$, then $\|X\|_2^2\le \thd(R)\le \frac{1}{-2\d}$. By the proof of \eqref{prop-sharp-l2-2} in Proposition \ref{prop-sharp-l2-mean-curv} $\|X\|_2^2\le \frac{1}{-2\d}$ implies \eqref{ineq-sharp-sufficient}. Hence \eqref{ineq-sharp-l2-mean-curv} always holds for such a curve regardless of its length. If $p$ is the center of mass of $\gamma$, then by the proof of Theorem \ref{thm-sharp-lambda1}, \eqref{ineq-total-squared-curvature}  holds.

The proof of Theorem \ref{thm-total-squared-curvature} is to show that $\|X\|_2^2\le \frac{1}{-2\d}$ also holds when the length of $\gamma$ is no more than $\frac{2\pi}{\sqrt{-\d}}$.

\begin{proof}[Proof of Theorem \ref{thm-total-squared-curvature}]
	~
	
	Let $\gamma$ be a regular closed curve with length $l$, and $X$ be the position vector about the center of mass for $\gamma$. We now show that the averaged position $\|X\|_2$ of $\gamma$ is also under control by
	\begin{equation}\label{ineq-length-position}
	\|X\|_2^2\le \frac{1}{\frac{4\pi^2}{l}-\d}.
	\end{equation}
	Indeed, by \eqref{ineq-1st-eigen-mean-curv} for $n=1$, we have
	$$\lambda_1\|X\|_2^2 \le 1+\d \|X\|_2^2,$$
	i.e., $\|X\|_2^2\le \frac{1}{\lambda_1-\d}$. Since the first eigenvalue of a curve is related to its length by $\lambda_1\cdot l^2=4\pi^2$, \eqref{ineq-length-position} is derived. 
	
	In particular, if $l\le \frac{2\pi}{\sqrt{-\d}}$, then $\|X\|_2^2\le \frac{1}{-2\d}.$ So by the proof of Proposition \ref{prop-sharp-l2-mean-curv} for $n=1$, \eqref{ineq-sharp-sufficient} and $\|X\|_2\cdot \|H\|_2\ge 1$ hold. By repeating the proof of Theorem \ref{thm-sharp-lambda1}, we conclude \eqref{ineq-Heintze} for curves with length $\le \frac{2\pi}{\sqrt{-\d}}$. 
\end{proof}

Since it may be helpful for further study on the totally squared curvature in a negatively curved space, we give more situations that yield \eqref{ineq-total-squared-curvature}

\begin{pro}\label{prop-curve-case-*}
	Let $N$ be a Cartan-Hadamard manifold with sectional curvature bounded above by $\d<0$. A regular closed curve $\gamma$ in $N$ satisfies \eqref{ineq-total-squared-curvature}
	if one of the following conditions holds.
	
	\begin{enumerate}\numberwithin{enumi}{theo}
		\item\label{prop-curve-1} $\gamma$ lies in a metric sphere $S(p_0,R)$, where $p_0$ is the center of mass of $\gamma$.
		\item\label{prop-curve-2} $\gamma$ does not lie in any metric sphere $S(p_0,R)$ but $$\|X\|_2^2\(1+\frac{\|X^\bot\|_2}{\|X\|_2}\)\le \frac{1}{-\d},$$
		where $X$ is the position vector about the center of mass $p_0$.
	\end{enumerate}
		
	Furthermore, there are curves of length $>\frac{2\pi}{\sqrt{-\d}}$ in a simply connected manifold of constant curvature $\d<0$ that satisfy \eqref{ineq-total-squared-curvature} but violate both \eqref{prop-curve-1} and \eqref{prop-curve-2}.
\end{pro}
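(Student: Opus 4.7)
The plan is to handle \eqref{prop-curve-1} and \eqref{prop-curve-2} by direct reduction to the machinery already developed in the proofs of Proposition \ref{prop-sharp-l2-mean-curv} and Theorem \ref{thm-sharp-lambda1}, and then to exhibit the curves promised by the ``furthermore'' clause by perturbing an $m$-fold cover of a geodesic circle in $\mathbb H^2$.

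For \eqref{prop-curve-1}, if $\gamma\subset S(p_0,R)$ with $p_0$ the center of mass of $\gamma$, then $r\equiv R$ along $\gamma$, so $\nabla^M r\equiv 0$, hence $X^\top=\thd(r)\nabla^M r\equiv 0$ and $X=X^\bot$. Lemma \ref{lem-l2-mean-curv-key} with $n=1$ then reduces to $\|X\|_2\|H\|_2\ge 1$, which is exactly the conclusion \eqref{ineq-sharp-l2-mean-curv} of Proposition \ref{prop-sharp-l2-mean-curv}; plugging this into \eqref{ineq-1st-eigen-mean-curv} in the proof of Theorem \ref{thm-sharp-lambda1} with $n=1$ yields \eqref{ineq-Heintze}, which for a curve is \eqref{ineq-total-squared-curvature}. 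For \eqref{prop-curve-2}, the hypothesis $\|X\|_2^2(1+\|X^\bot\|_2/\|X\|_2)\le 1/(-\d)$ rewrites as $\|X\|_2\|X^\bot\|_2+\|X\|_2^2\le 1/(-\d)$, i.e.\ precisely the sufficient condition \eqref{ineq-sharp-sufficient} isolated in the proof of Proposition \ref{prop-sharp-l2-mean-curv}; running that $n=1$ argument verbatim produces $\|X\|_2^2\|H\|_2^2\ge 1$, and the conclusion then follows exactly as in the previous case.

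The heart of the proof is the construction of the example. In the model $\mathbb H^2$ of constant curvature $\d<0$, I would start with a geodesic circle $C_R$ of radius $R$ traversed $m\ge 2$ times. Using $\kappa=\chd(R)/\shd(R)$, $l=2\pi m\shd(R)$, and the identity $\chd^2-1=-\d\shd^2$, a direct calculation gives
\begin{equation*}
\int_\gamma \kappa^2\,ds-\left(\frac{4\pi^2}{l}-\d l\right)=\frac{2\pi(m^2-1)}{m\,\shd(R)}>0,
\end{equation*}
so \eqref{ineq-total-squared-curvature} is strictly satisfied, and $l>2\pi/\sqrt{-\d}$ whenever $\shd(R)>1/(m\sqrt{-\d})$, which is automatic once $\shd(R)>1/\sqrt{-\d}$ and $m\ge 2$. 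Along this $m$-cover, $r\equiv R$ and $X=X^\bot$, so $\|X\|_2^2+\|X\|_2\|X^\bot\|_2=2\thd^2(R)$; this exceeds $1/(-\d)$ exactly when $\chd^2(R)>2$, equivalently $\shd(R)>1/\sqrt{-\d}$. Choosing $R$ above that threshold, the $m$-cover satisfies the strict form of \eqref{ineq-total-squared-curvature} and violates the norm bound of \eqref{prop-curve-2}, but unfortunately it still lies on a metric sphere. A small smooth perturbation---for instance, replacing the $m$ coincident loops by $m$ nearly-circular loops at slightly different radii, joined smoothly---breaks the metric-sphere property required by \eqref{prop-curve-1} while, by continuity, keeping both strict inequalities in force.

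I expect the main obstacle to be precisely this last perturbation step: one must check that the perturbation can be simultaneously made small enough to preserve both strict inequalities and non-degenerate enough to push $\gamma$ off every metric sphere centered at the (inevitably displaced) new center of mass. A clean way to arrange this is to work in a one-parameter family of perturbations, invoke openness of strict inequalities to control the two quantitative conditions, and verify non-sphericity by observing that the function $d(\cdot,q)^2$ restricted to a genuinely ``bumpy'' curve cannot be constant for any $q\in N$ once the loops have been deformed at distinct radii.
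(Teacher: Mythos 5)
Your proofs of \eqref{prop-curve-1} and \eqref{prop-curve-2} are correct and essentially the paper's own: for \eqref{prop-curve-1} the key inequality \eqref{ineq-l2-mean-curv-key} with $X=X^\bot$ gives $\|X\|_2\|H\|_2\ge 1$, and the hypothesis in \eqref{prop-curve-2} is indeed just a rewriting of the sufficient condition \eqref{ineq-sharp-sufficient} from the $n=1$ case of Proposition \ref{prop-sharp-l2-mean-curv} (the paper re-derives it with $b=\|X^\top\|_2/\|X\|_2$, but the content is identical). Where you genuinely diverge is the ``furthermore'' example. The paper starts from a Langer--Singer curve of large length violating \eqref{ineq-total-squared-curvature}, contracts it radially toward its center of mass, takes the maximal parameter $\mu_0$ at which \eqref{ineq-total-squared-curvature} still holds, and then has to show that the norm condition of \eqref{prop-curve-2} fails at $\gamma_{\mu_0}$; this forces a rigidity argument: equality in \eqref{ineq-l2-mean-curv-key} would make the cone over $\gamma$ carry a metric $d\rho^2+f^2d\theta^2$ with $f=c\sinh\rho$ and $H=aX^\bot$, and the resulting ODEs give $f_\rho\equiv 0$, a contradiction. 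Your route --- an $m$-fold cover of a geodesic circle with $\shd(R)>1/\sqrt{-\d}$, for which the margin in \eqref{ineq-total-squared-curvature} is the explicit $2\pi(m^2-1)/(m\,\shd(R))$ (your computation checks out) and $\|X\|_2^2+\|X\|_2\|X^\bot\|_2=2\thd^2(R)>1/(-\d)$, followed by a small perturbation --- is more elementary and avoids the ODE analysis entirely. The perturbation step you flag is the only thing left to nail down, and it does work: strictness of the two numerical conditions is open in the $C^2$ topology once one knows that the center of mass, hence $\|X\|_2$ and $\|X^\bot\|_2$, varies continuously with the curve, which follows from the strict convexity of the energy $\mathcal{F}$ (its minimizer is nondegenerate since the Hessian of each $\Phi_\d(d(x,\cdot))$ is positive definite under $K\le\d<0$); and non-sphericity is immediate if you arrange the perturbed curve to contain honest arcs of two concentric circles of distinct radii, since in constant curvature two distinct metric circles meet in at most two points, so no metric sphere about any center --- in particular about the displaced center of mass --- can contain both arcs. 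What the paper's heavier construction buys is borderline information: it produces curves at the threshold of \eqref{ineq-total-squared-curvature} and shows that the equality/rigidity analysis of Proposition \ref{prop-sharp-l2-mean-curv} breaks down for long curves, which the authors then use to explain why their comparison method cannot reach Conjecture \ref{conj-main}; your examples satisfy \eqref{ineq-total-squared-curvature} with room to spare and carry no such rigidity content, but they establish the stated claim more simply.
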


Note that \eqref{prop-curve-2} makes sense for curves with a large length, but twisted with respect to the radial direction from $p_0$ so much that $\|X^\bot\|_2$ is very small.

\begin{proof}

For \eqref{prop-curve-1}, let $\gamma$ be in $S(p_0,R)$. Then $X^\bot=X$ and \eqref{ineq-l2-mean-curv-key} directly implies that $\|X\|_2\cdot \|H\|_2\ge 1$. By repeating the proof of Theorem \ref{thm-sharp-lambda1}, we conclude \eqref{ineq-total-squared-curvature}.

If equality in \eqref{ineq-total-squared-curvature} holds for curves satisfying \eqref{prop-curve-1} or \eqref{prop-curve-2}, then by the proof of the rigidity of Proposition \ref{ineq-total-squared-curvature}, $\gamma$ is a geodesic in a metric sphere.

For \eqref{prop-curve-2}, let $\gamma$ be a regular closed curve not lying in any metric sphere centered at $p_0$. By \eqref{ineq-l2-mean-curv-key},
\begin{equation}\label{ineq-general-curve}
\|X\|_2\|X^\bot\|_2\|H\|_2\ge \|X\|_2\(1 + \d \|X^\top\|_2^2\).
\end{equation}
For simplicity, let us denote $h=\|H\|_2$, and $\|X^\top\|_2=b\|X\|_2=bx$, then $0\le b\le 1$ is a constant. Then \eqref{ineq-general-curve} is written as 
$$\sqrt{1-b^2}xh\ge 1+\d b^2x^2.$$
In order to guarantee $xh=\|X\|_2\|H\|_2\ge 1$, it suffices
$$1+\d b^2x^2\ge \sqrt{1-b^2}.$$
Since $b\neq 0$, it is equivalent to
$$x^2(1+\sqrt{1-b^2})\le \frac{1}{-\d},$$
which is a reformation of the condition in \eqref{prop-curve-2}. 

We now prove that in general there are curves satisfying \eqref{ineq-l2-mean-curv-key} but violating \eqref{prop-curve-1} and \eqref{prop-curve-2}.
Let $\gamma$ be a regular curve in $\mathbb H^n$ with a very large length but uniformly bounded $\int \kappa^2 ds$ that violates \eqref{ineq-l2-mean-curv-key}, as \cite[Figure 8]{Langer-Singer1984} shows. Let $p_0$ be the center of mass of $\gamma$ and $X$ the position vector defined by \eqref{ineq-def-position-vec}. By \eqref{prop-curve-1}, $\gamma$ does not lie in any sphere at $p_0$. Let us shrink $\gamma$ along the radial direction from $p_0$ to a family of curves $\gamma_{\mu}$, defined by $X\circ\gamma_\mu(t)=\mu X(\gamma(t))$. Then the center of mass of $\gamma_\mu$ still is $p_0$, and $\gamma_{\mu}$ never lies in a metric sphere at $p_0$. By Theorem \ref{thm-total-squared-curvature}, there is a maximal $\mu_0>0$ such that \eqref{ineq-total-squared-curvature} holds for $\gamma_{\mu_0}$.  We will show that the condition on averaged position in \eqref{prop-curve-2} fails for $\gamma_{\mu_0}$.

Since \eqref{ineq-1st-eigen-mean-curv} for $n=1$ holds, we have $\|X\|_2\|H\|_2=1$ at $\gamma_{\mu_0}$. In order to see that $\|X\|_2^2\(1+\frac{\|X^\bot\|_2}{\|X\|_2}\)> 1$,
by the proof of \eqref{prop-curve-2}, it suffices to show that the strict inequality holds in \eqref{ineq-l2-mean-curv-key} for $\d=-1$, i.e.,
$$\|X^\bot\|_2\|H\|_2> 1- \|X^\top\|_2^2.$$

Let us argue by contradiction. The equality of \eqref{ineq-l2-mean-curv-key} implies that the induced metric on the parametrized surface consists of minimal geodesics from $p_0$ to $\gamma$ admits a form $d\rho^2+f^2d\theta^2$, where $f=c\sinh(\rho)$ depends only on $\rho$.
Then by direct calculation,
\begin{align*}
H(s)&=\nabla_{\gamma'(s)}\gamma'(s)=\(\rho''-(\theta')^2ff_\rho\)\frac{\partial}{\partial \rho}+\(\theta''+2\rho'\theta'\frac{f_\rho}{f}\)\frac{\partial}{\partial \theta}.
\end{align*}
At the same time, by the equality in Cauchy-Schwartz inequality there is a constant such that $H=aX^\bot$, where
\begin{align*}
X^\bot & =\frac{\sinh\rho}{\cosh \rho}\(1-(\rho')^2\)\frac{\partial}{\partial \rho}-\frac{\sinh\rho}{\cosh \rho}\rho'\theta' \frac{\partial }{\partial \theta}.
\end{align*}
Together with $\theta'^2=1-\rho'^2$, it follows that
\begin{align}
\label{ineq-special-1}
\rho''=\(1-(\rho')^2\)\(ff_\rho+a\frac{\sinh\rho}{\cosh\rho}\)\\
\label{ineq-special-2}
\theta''=-\rho'\theta'\(2\frac{f_\rho}{f}+a \frac{\sinh\rho}{\cosh \rho}\)-(\theta')^2\frac{f_\theta}{f}.
\end{align}
Since $\theta'\theta''=-\rho'\rho''$, multiplying $\theta'$ on both sides of \eqref{ineq-special-2} yields
\begin{equation}\label{ineq-special-3}
\rho''=\(1-(\rho')^2\)\(2\frac{f_\rho}{f}+a \frac{\sinh\rho}{\cosh \rho}\).
\end{equation}
Note that \eqref{ineq-special-1} together with \eqref{ineq-special-3} implies $f_\rho\equiv 0$. This contradicts to our choice of $\gamma_{\mu_0}$.
\end{proof}

The proof of Proposition \ref{prop-curve-case-*} shows that, the rigidity as in \eqref{ineq-sharp-l2-mean-curv} must fail for a closed curve with a large length that are not covered by \eqref{prop-curve-1} and \eqref{prop-curve-2}. The losing of rigidity also explains why the comparison method in this paper is not enough to solve Conjecture \ref{conj-main}. 

Let $\gamma$ be a closed regular curve in a Cartan-Hadamard manifold with sectional curvature $\le \d<0$. Instead of a uniform lower bound as in Conjecture \ref{conj-main}, lower bounds depending on the length $l$ of $\gamma$ can be derived from known results. Since \eqref{ineq-Heintze} always holds for $\d=0$, equivalently we have $\int_{\gamma}\kappa^2 ds\ge \frac{4\pi^2}{l}$.
It was conjectured by N. H. Kuiper in 1973, and confirmed by \cite{Brickell-Hsiung1974} and \cite{Tsukamoto1974} independently that the total curvature $\int_{\gamma}|\kappa|>2\pi$. Hence by the Cauchy-Schwartz inequality $\int_\gamma\kappa^2>\frac{4\pi^2}{l}$. 

In the following we give a better lower bound of the total squared curvature, which takes an optimal minimum at $l=2\pi \sqrt{-\d}$.

\begin{pro}\label{pro-bound-large-length}
	Let $\gamma$ be a regular closed curve in a Cartan-Hadamard manifold with sectional curvature bounded above by $\d<0$. Then
	\begin{equation}\label{ineq-curve-general}
	\int_{\gamma}\kappa^2ds\ge \min\left\{\frac{4\pi^2}{l}-\d l, \frac{8\pi^2}{l}\right\},
	\end{equation}
	where $l$ is the length of $\gamma$.
\end{pro}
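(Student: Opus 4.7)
The plan is to deduce Proposition \ref{pro-bound-large-length} directly from Theorem \ref{thm-total-squared-curvature} by a simple monotonicity observation: the hypothesis ``sectional curvature $\le \d<0$'' on a Cartan-Hadamard manifold is preserved under replacing $\d$ by any $\d'\in(\d,0)$. So I am free to apply Theorem \ref{thm-total-squared-curvature} with any such $\d'$, provided the corresponding length condition $l\le\frac{2\pi}{\sqrt{-\d'}}$ holds. The minimum inside \eqref{ineq-curve-general} then suggests exactly how $\d'$ should be chosen.

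First I would record the elementary comparison $\frac{4\pi^2}{l}-\d l\le \frac{8\pi^2}{l}$ iff $-\d l^2\le 4\pi^2$ iff $l\le \frac{2\pi}{\sqrt{-\d}}$, with equality at the critical length $l=\frac{2\pi}{\sqrt{-\d}}$. This splits the argument into two cases.

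\textbf{Case 1:} $l\le \frac{2\pi}{\sqrt{-\d}}$. Here Theorem \ref{thm-total-squared-curvature} applies directly and gives $\int_\gamma\kappa^2\,ds\ge \frac{4\pi^2}{l}-\d l$, which by the comparison above is exactly the minimum in \eqref{ineq-curve-general}.

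\textbf{Case 2:} $l> \frac{2\pi}{\sqrt{-\d}}$. Set $\d':=-\frac{4\pi^2}{l^2}$. Then $\d<\d'<0$, so $N$ still has sectional curvature $\le \d'$, and by construction $l=\frac{2\pi}{\sqrt{-\d'}}$, so the length hypothesis of Theorem \ref{thm-total-squared-curvature} is satisfied (with equality) for $\d'$ in place of $\d$. Applying the theorem with $\d'$ yields
$$\int_\gamma\kappa^2\,ds\ge \frac{4\pi^2}{l}-\d' l=\frac{4\pi^2}{l}+\frac{4\pi^2}{l}=\frac{8\pi^2}{l},$$
which is the minimum on the right of \eqref{ineq-curve-general} in this range.

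\textbf{Main obstacle.} There is really no obstacle beyond Theorem \ref{thm-total-squared-curvature} itself; the entire work has been absorbed into its proof. The only insight required is that the negative upper bound on sectional curvature can be relaxed without losing the Cartan-Hadamard hypothesis, and then to tune $\d'$ so that $\gamma$ sits exactly at the critical length $\frac{2\pi}{\sqrt{-\d'}}$ for the relaxed bound. After that the stated minimum is automatic from a one-line computation.
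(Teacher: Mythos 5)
Your proof is correct, and it takes a genuinely different route from the paper's. The paper proves the proposition directly from its internal machinery: it splits on whether $\|X\|_2^2\le \frac{1}{-2\d}$ (where $X$ is the position vector about the center of mass), handles the small case via the sufficient condition \eqref{ineq-sharp-sufficient} exactly as in the proof of Theorem \ref{thm-total-squared-curvature}, and in the large case combines \eqref{ineq-1st-eigen-mean-curv} with the key lemma \eqref{ineq-l2-mean-curv-key} and an AM--GM estimate on $\frac{4\pi^2}{l^2}\frac{\|X\|_2}{\|X^\bot\|_2}+(-\d)\frac{\|X^\bot\|_2}{\|X\|_2}$ to get $\|H\|_2^2>\frac{8\pi^2}{l^2}$. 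Your argument instead treats Theorem \ref{thm-total-squared-curvature} as a black box and exploits the monotonicity of the hypothesis in $\d$: relaxing the curvature bound to $\d'=-\frac{4\pi^2}{l^2}$ puts $\gamma$ exactly at the critical length for $\d'$, and the theorem then returns $\frac{8\pi^2}{l}$. This is legitimate (the Cartan--Hadamard hypothesis and the bound $K\le\d\le\d'$ are indeed preserved, the length condition is satisfied with equality, and there is no circularity since the paper's proof of Theorem \ref{thm-total-squared-curvature} does not use Proposition \ref{pro-bound-large-length}), and it is shorter and arguably more transparent. The one thing your route does not recover is the strict inequality $\int_\gamma\kappa^2\,ds>\frac{8\pi^2}{l}$ in the regime $l>\frac{2\pi}{\sqrt{-\d}}$, which the paper's direct argument yields (via $\|X\|_2^2>\frac{1}{-2\d}$) and which is quoted in the introduction; since the proposition itself only asserts the non-strict bound, this is a loss of a byproduct rather than a gap in the proof of the stated result.
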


\begin{proof}
	Let $p_0$ be the center of mass of $\gamma$, and $X$ be the position vector from $p_0$. By the discussion before Theorem \ref{thm-total-squared-curvature}'s proof, it suffices to consider the case $\|X\|_2^2>\frac{1}{-2\d}$. Then by \eqref{ineq-1st-eigen-mean-curv} and \eqref{ineq-l2-mean-curv-key},
	$$\frac{4\pi^2}{l^2} \|X\|_2^2\le \d \|X^\bot\|_2^2+\|X^\bot\|_2\cdot \|H\|_2.$$
	This yields
	$$\frac{\|H\|_2^2}{\|X\|_2^2}\ge \(\frac{4\pi^2}{l^2} \cdot\frac{\|X\|_2}{\|X^\bot\|_2}+(-\d) \frac{\|X^\bot\|_2}{\|X\|_2}\)^2\ge \frac{-16 \d \pi^2}{l^2}.$$
	Since $\|X\|_2^2> \frac{1}{-2\d}$, we derive $\|H\|_2^2=\frac{1}{l}\int_{\gamma}\kappa^2ds > \frac{8\pi^2}{l^2}$.
\end{proof}

Let us give some further remarks on Conjecture \ref{conj-main} and the inequality \eqref{ineq-total-squared-curvature}.

Because $\int_{\gamma}\kappa^2ds$ is the bending energy of an inextensible wire $\gamma$, a critical point of this functional was called a (free) elasticae by Euler in 1744, who solved the problem for curves in the plane (see \cite{FKN2016} for a detailed introduction, and \cite{Truesdell1983} for a brief historical discussion). Since then, elastica problems have attracted generations of mathematicians and physicists. 

A complete classification of elastica (with fixed arclength) in the sphere and hyperbolic plane was given by Langer and Singer \cite{Langer-Singer1984} in the same paper mentioned before, and independently by Bryant and Griffiths \cite{Bryant-Griffiths1986}. The optimal lower bound \eqref{ineq-Langer-Singer} essentially comes from the behavior of all elastica in the hyperbolic plane as investigated in \cite{Langer-Singer1984}, and it had been used to prove the Willmore conjecture for tori of revolution \cite{Langer-Singer1984-2}.

The curve-straightening flow of $\gamma$ along the negative gradient of bending energy with fixed length was also extensively studied (\cite{Langer-Singer1985,Langer-Singer1987},\cite{Linner1998}, etc). However, phenomena happens in the hyperbolic plane are complicate. It is unknown whether the curve-straightening takes any closed curve to the equator of $\mathbb H^2$, i.e., the global minimum of $\int_{\gamma}\kappa^2ds$, since the Palais-Smale condition fails to hold; see \cite{Steinberg1999}, \cite{Linner1998} and \cite{Singer2008} for more discussions. A solution to Conjecture \ref{conj-main} via curve-straightening seems to be far away from being reachable in a Cartan-Hadamard manifold.

By analyzing the behavior of elastica, it was proved in \cite{Langer-Singer1984} that there is $L>2\pi$ such that \eqref{ineq-total-squared-curvature} holds for any regular \emph{simple} closed curve in $\mathbb H^2$ with length $< L$. We do not know whether it is true in a Cartan-Hadamard manifold of negative curvature.

\begin{prob}
	Given any $\mu\le -1$, is there $L_0(\mu)>1$ such that \eqref{ineq-total-squared-curvature} holds for any regular null-homotopic simple closed curves with length $\le L_0(\mu)$ in a Riemannian manifold $N$ with sectional curvature $\mu \le K_N\le -1$?
\end{prob}

\end{document}